\providecommand{\U}[1]{\protect\rule{.1in}{.1in}}
\newtheorem{theorem}{Theorem}
\theoremstyle{definition}
\newtheorem{remark}[theorem]{Remark}
\newtheorem{example}[theorem]{Example}
\theoremstyle{plain}
\newtheorem{corollary}[theorem]{Corollary}
\begin{document}
\title
[Well-posedness of Bayesian data assimilation and inverse problems]{On well-posedness of Bayesian data assimilation and inverse problems in Hilbert space}
\thanks
{Supported partially by the Czech Science Foundation grant 13-34856S and NSF grant 1216481}
\author{Ivan Kasanick\'y}
\address
{Institute of Computer Science, Academy of Sciences of the Czech Republic, Pod
Vodárenskou v\v{e}\v{z}\'{\i}\ 271/2, 182 07 Praha 8, Czech Republic}
\author{Jan Mandel}
\address
{University of Colorado Denver, Denver, CO 80217-3364, USA, and Institute of
Computer Science, Academy of Sciences of the Czech Republic, Pod Vodárenskou
v\v{e}\v{z}\'{\i} 271/2, 182 07 Praha 8, Czech Republic}
\begin{abstract}
Bayesian inverse problem on an infinite dimensional separable Hilbert space
with the whole state observed is well posed when the prior state distribution
is a Gaussian probability measure and the data error covariance is a cylindrical
Gaussian measure whose covariance has positive lower bound. If the state
distribution and the data distribution are equivalent Gaussian probability
measures, then the Bayesian posterior measure is not well defined. If the
state covariance and the data error covariance commute, then the Bayesian
posterior measure is well defined for all data vectors if and only if the data
error covariance has positive lower bound, and the set of data vectors for
which the Bayesian posterior measure is not well defined is dense if the data
error covariance does not have positive lower bound.
\\ \smallskip\textsc{Keywords:}
White noise, Bayesian inverse problems, cylindrical measures
\\ \smallskip\textsc{AMS Subject Classification:} 65J22, 62C10, 60B11
\end{abstract}
\maketitle

\section{Introduction}

Data assimilation and the solution of inverse problems on infinite dimensional
spaces are of interest as a limit case of discrete problems of an increasing
resolution and thus increasing dimension. Computer implementation is by
necessity finite dimensional, yet studying a discretized problem (such as a
finite difference or finite element discretization) as an approximation of an
infinite-dimensional one (such as a partial differential equation) is a basic
principle of numerical mathematics. This principle has recently found use in
Bayesian inverse problems as well, for much the same reason; important
insights in high-dimensional probability are obtained by considering it in the
light of infinite dimension. See \cite{Stuart-2010-IPB,Stuart-2013-BAI} for an
introduction and an extensive discussion.

Bayesian data assimilation and inverse problems are closely linked; the prior
distribution acts as a regularization and the maximum aposteriori probability
(MAP) delivers a single solution to the inverse problem. In the Gaussian case,
the prior becomes a type of Tikhonov regularization and the MAP\ estimate is
essentially a~regularized least squares solution. Since there is no Lebesque
measure in an infinite dimension, the standard probability density does not
exist, and the MAP\ estimate needs to be understood in a generalized sense
\cite{Cotter-2009-BIP,Dashti-2012-BPB}.

However, unlike in a finite dimension, even the simplest problems are often
ill-posed when the data is infinite dimensional. It is often assumed that the
data likelihood is such that the problem is well posed, or, more specifically,
that the data space is finite dimensional, e.g.,
\cite{Cotter-2009-BIP,Dashti-2012-BPB,Hosseini-2016-WPB-2,Hosseini-2016-WPB,Iglesias-2015-FBM,Stuart-2010-IPB,Stuart-2013-BAI}%
. Well-posedness of the infinite dimensional problem affects the performance
of stochastic filtering algorithms for finite dimensional approximations; it
was observed computationally \cite[Sec. 4.1]{Beezley-2009-HDA} that the
performance of the ensemble Kalman filter and the particle filter does not
deteriorate with increasing dimension when the state distribution approaches
a~Gaussian probability measure, but the curse of dimensionality sets in when
the state distribution approaches white noise. A related theoretical analysis
was recently developed in \cite{Agapiou-2015-ISC}.

It was noted in \cite{Lasanen-2007-MID} that Bayesian filtering is well
defined only for some values of observations when the data space is infinite
dimensional. In \cite{Agapiou-2015-ISC}, necessary and sufficient conditions
were given in the Gaussian case for the Bayesian inverse problem to be well
posed for all data vectors a.s. with respect to the data distribution, which
was understood as a Gaussian measure on a larger space than the given data
space. However, in the typical case studied here, such random data vector is
a.s. not in the given data space, so the conditions in \cite{Agapiou-2015-ISC}
are not informative in our setting. See Remark \ref{rem:agapiou} for more details.

In this paper, we study perhaps the simplest case of a Bayesian inverse
problem on an infinite dimensional separable Hilbert space: the whole state is
observed, the observation operator (the forward operator in inverse problems
nomenclature)\ is identity, and both the state distribution and the data error
distribution (which enters in the data likelihood) are Gaussian. The state
distribution is (a standard, $\sigma$-additive) Gaussian measure, but the data
error distribution is allowed to be only a weak Gaussian measure
\cite{Balakrishnan-1976-AFA}, that is, only a finitely additive cylindrical
measure \cite{Schwartz-1973-RMA}. This way, we may give up the $\sigma
$-additivity of the data error distribution, but the data vectors are in the
given data space. Weak Gaussian measure is $\sigma$-additive if and only if
its covariance is has finite trace. White noise, with covariance bounded away
from zero on infinite dimensional space, is an example of a weak Gaussian
measure, which is not $\sigma$-additive.

It is straightforward that when the data error covariance has positive lower
bound, then the least squares, Kalman filter, and the Bayesian posterior are
all well defined (Theorems \ref{thm:bounded-3dvar}, \ref{thm:boulded-kf}, and
\ref{thm:bounded-bayes}). The main results of this paper consist of the study
of the converse when the state is a Gaussian measure:

\begin{enumerate}
\item Example \ref{ex:3dvar}: If the state covariance and the data error
covariance are the same operator with finite trace, then the least squares are
not well posed for some data vectors.

\item Example \ref{ex:equivalent}: If the state distribution and the data
error distribution are equivalent Gaussian measures on infinite dimensional
space, then the posterior measure is not well defined.

\item Theorem \ref{thm:commute}: If the state covariance and the data error
covariance commute, then the posterior measure is well defined for all data
vectors if and only if the data error covariance has positive lower bound.

\item Corollary \ref{cor:dense}: If the state covariance and the data
covariance commute and the data covariance does not have positive lower bound,
then the set of vectors for which the posterior measure is not well defined is dense.
\end{enumerate}

The paper is organized as follows. In Section \ref{sec:notation}, we recall
some background and establish notation. The well-posedness of data
assimilation as a least squares problem is considered in Section
\ref{sec:3DVAR}, the well-posedness of Kalman filter formulas in Section
\ref{sec:KF}, and the well-posedness of the Bayesian setting in terms of
measures in Section \ref{sec:Bayes}.

\section{Notation}

\label{sec:notation}We denote by $\mathcal{H}$ a separable Hilbert space with
a real-valued inner product denoted by $\left\langle u,v\right\rangle $ and
the norm $\left\vert u\right\vert =\sqrt{\left\langle u,u\right\rangle }$. We
assume that $\mathcal{H}$ has infinite dimension, though all statements hold
in finite dimension as well. We denote by $\left[  \mathcal{H}\right]  $ the
space of all bounded linear operators from $\mathcal{H}$ to $\mathcal{H}.$ We
say that $\mathrm{R}\in\left[  \mathcal{H}\right]  $ has positive lower bound
if
\[
\left\langle \mathrm{R}u,u\right\rangle \geq\alpha\left\langle
u,u\right\rangle =\alpha\left\vert u\right\vert ^{2}%
\]
for some $\alpha>0$ and all $u\in\mathcal{H}.$ We write $\mathrm{R}>0$ when
$\mathrm{R}\in\left[  \mathcal{H}\right]  $ is symmetric, i.e., $\mathrm{R}%
=\mathrm{R}^{\ast}$, where $\mathrm{R}^{\ast}$ denotes the adjoint operator to
$\mathrm{R},$ and has positive lower bound. The operator $\mathrm{R}\in\left[
\mathcal{H}\right]  $ is positive semidefinite if
\[
\left\langle \mathrm{R}u,u\right\rangle \geq0
\]
for all $u\in\mathcal{H},$ and we use the notation $\mathrm{R}\geq0$ when
$\mathrm{R}$ is symmetric and positive semidefinite. We say that
$\mathrm{R\geq0}$ is a trace class operator if
\[
\operatorname*{Tr}\mathrm{R}=\sum_{i=1}^{\infty}\left\langle \mathrm{R}%
e_{i},e_{i}\right\rangle <\infty
\]
where $\left\{  e_{i}\right\}  $ is a total orthonormal set in $\mathcal{H}.$
$\operatorname*{Tr}\mathrm{R}$ does not depend on the choice of $\left\{
e_{i}\right\}  $.

We denote by $\mathcal{L}\left(  \mathcal{H}\right)  $ the space of all random
variables on $\mathcal{H}$, i.e., if $X\in\mathcal{L}\left(  \mathcal{H}%
\right)  $, then $X$ is a measurable mapping from a probability space $\left(
\Omega,\mathcal{A},P\right)  $ to $\left(  \mathcal{H},\mathcal{B}\left(
\mathcal{H}\right)  \right)  $ where $\mathcal{B}\left(  \mathcal{H}\right)  $
denotes Borel $\sigma$-algebra on $\mathcal{H}$. A weak random variable
$W\in\mathcal{L}_{w}\left(  \mathcal{H}\right)  $ is a mapping
\[
W:\ \left(  \Omega,\mathcal{A},P\right)  \rightarrow\left(  \mathcal{H}%
,\mathcal{C}\left(  \mathcal{H}\right)  \right)  ,
\]
where $\left(  \Omega,\mathcal{A},P\right)  $ is a general probability space,
and $\mathcal{C}\left(  \mathcal{H}\right)  $ denotes an algebra of
cylindrical sets on $\mathcal{H}$, such that:

\begin{enumerate}
\item for all $D\in\mathcal{C}\left(  \mathcal{H}\right)  $, it holds that
$W^{-1}\left(  D\right)  \in\mathcal{A}$, and

\item for any $n\in\mathbb{N}$ and any $e_{1},\ldots,e_{n}\in\mathcal{H}$ the
mapping
\[
V:\ \omega\in\Omega\rightarrow\left(  \left\langle e_{1},W\left(
\omega\right)  \right\rangle ,\ldots,\left\langle e_{n},W\left(
\omega\right)  \right\rangle \right)  ^{\ast}\in\mathbb{R}^{n}%
\]
is measurable, i.e., $V$ is an $n$-dimensional real random vector.
\end{enumerate}

We denote by $\mathcal{L}_{w}\left(  \mathcal{H}\right)  $ the space of all
weak random variables on $\mathcal{H}$. Obviously, when $\mathrm{dim}\left(
\mathcal{H}\right)  <\infty,$ then $\mathcal{L}_{w}\left(  \mathcal{H}\right)
=\mathcal{L}\left(  \mathcal{H}\right)  ,$ i.e., weak random variables are
interesting only if the dimension of the state is infinite. A weak random
variable $\mbox{W\ensuremath{\in}}\mathcal{L}_{w}\left(  \mathcal{H}\right)  $
has weak Gaussian distribution (also called a cylindrical measure), denoted
$W\sim\mathcal{N}\left(  m,\mathrm{R}\right)  ,$ where $m\in\mathcal{H}$ is
the mean of W, and $\mathrm{R}\in\left[  \mathcal{H}\right]  ,\mathrm{R}%
\geq0,$ is the covariance of $W$ if, for any finite set $\left\{  e_{1}%
,\ldots,e_{n}\right\}  \subset\mathcal{H},$ the random vector $\left(
\left\langle e_{1},W\right\rangle ,\ldots,\left\langle e_{n},W\right\rangle
\right)  ^{\ast}$ has multivariate Gaussian distribution $\mathcal{N}\left(
\mu,\Sigma\right)  $ with
\[
\mu=\left(  \mathrm{R}e_{1},\mathrm{\ldots,R}e_{n}\right)  ^{\ast}%
\in\mathbb{R}^{n}%
\]
and covariance matrix $\Sigma\in\mathbb{R}^{n\times n},$
\[
\left[  \Sigma\right]  _{i,j}=\left\langle e_{i},\mathrm{R}e_{j}\right\rangle
,
\]
where $\left[  \Sigma\right]  _{i,j}$ denotes the element of the matrix
$\Sigma$ in the $i^{\text{th}}$ row and the $j^{\text{th}}$ column. It can be
shown that $W\sim\mathcal{N}\left(  m,\mathrm{R}\right)  $ is measurable,
i.e., $W\in\mathcal{L}\left(  \mathcal{H}\right)  ,$ if and only if the
covariance $\mathrm{R}$ is trace class, e.g., \cite[Theorem 6.2.2]%
{Balakrishnan-1976-AFA}.

For further background on probability on infinite dimensional spaces and
cylindrical measures see, e.g.,
\cite{Bogachev-1998-GM,DaPrato-1992-SEI,Vakhania-1987-PDB}.

\section{Data assimilation}

Suppose that $\left\{  X^{\left(  t\right)  }\right\}  _{t\in\mathbb{N}}$ is a
dynamical system defined on a separable Hilbert space $\mathcal{H}.$ Data
assimilation uses observations of the form
\[
Y^{\left(  t\right)  }=\mathrm{H}X^{\left(  t\right)  }+W^{\left(  t\right)
},\ t\in\mathbb{N},
\]
where $\mathrm{H}\in\left[  \mathcal{H}\right]  $, and $W^{\left(  t\right)
}\sim\mathcal{N}\left(  0,\mathrm{R}\right)  $, to estimate sequentially the
states of the dynamical system. In each data assimilation cycle, i.e., for
each $t\in\mathbb{N},$ a forecast state $X^{\left(  t\right)  ,f}$ is combined
with the observation $Y^{\left(  t\right)  }$ to produce a better estimate of
the true state $X^{\left(  t\right)  }.$ Hence, one data assimilation cycle is
an inverse problem \cite{Stuart-2010-IPB,Stuart-2013-BAI,Cotter-2009-BIP}.
Since we are interested in one data assimilation cycle only, we drop the time
index for the rest of the paper.

\subsection{3DVAR}

\label{sec:3DVAR}The 3DVAR method is based on a minimization of the cost
function%
\begin{equation}
J^{\text{3DVAR}}\left(  x\right)  =\left|  x-x^{f}\right|  _{\mathrm{B}^{-1}%
}^{2}+\left|  y-x\right|  _{\mathrm{R}^{-1}}^{2} \label{eq:3DVAR}%
\end{equation}
where $\mathrm{B}$ is a known background covariance operator and $\mathrm{R}$
is a data noise covariance. If the state space is finite dimensional, and the
matrix $\mathrm{B}$ is regular, then the norm on the right-hand side of
(\ref{eq:3DVAR}) is defined by
\[
\left|  x\right|  _{\mathrm{B}^{-1}}^{2}=\left\langle \mathrm{B}%
^{-\nicefrac{1}{2}}x,\mathrm{B}^{-\nicefrac{1}{2}}x\right\rangle
,\ x\in\mathcal{H}.
\]
However, when the state space is infinite dimensional, the inverse of a
compact linear operator is unbounded and only densely defined. It is then
natural to extend the quadratic forms on the right-hand side of
(\ref{eq:3DVAR}) as%
\begin{equation}
\left\vert x\right\vert _{\mathrm{B}^{-1}}^{2}=%
\begin{cases}
\left\langle \mathrm{B}^{-\nicefrac{1}{2}}x,\mathrm{B}^{-\nicefrac{1}{2}}%
x\right\rangle  & \text{if}\ x\in\mathrm{B}^{\nicefrac{1}{2}}\left(
\mathcal{H}\right)  ,\\
\infty & \text{if}\ x\notin\mathrm{B}^{\nicefrac{1}{2}}\left(  \mathcal{H}%
\right)  ,
\end{cases}
\label{eq:3dvar:B-norm}%
\end{equation}
where $\mathrm{B}^{\nicefrac{1}{2}}\left(  \mathcal{H}\right)  =\mathrm{Im}%
\left(  \mathrm{B}^{\nicefrac{1}{2}}\right)  ,$ i.e., $\mathrm{B}%
^{\nicefrac{1}{2}}\left(  \mathcal{H}\right)  $ denotes the image of the
operator $\mathrm{B}^{\nicefrac{1}{2}}$, and
\begin{equation}
\left\vert x\right\vert _{\mathrm{R}^{-1}}^{2}=%
\begin{cases}
\left\langle \mathrm{R}^{-\nicefrac{1}{2}}x,\mathrm{R}^{-\nicefrac{1}{2}}%
x\right\rangle  & \text{if}\ x\in\mathrm{R}^{\nicefrac{1}{2}}\left(
\mathcal{H}\right)  ,\\
\infty & \text{if}\ x\notin\mathrm{\mathrm{R}}^{\nicefrac{1}{2}}\left(
\mathcal{H}\right)  .
\end{cases}
\label{eq:3dvar:R-norm}%
\end{equation}
Obviously, the 3DVAR cost function attains infinite value, and, even worse, it
is not hard to construct an example when $J^{\text{3DVAR}}\left(  x\right)
=\infty$ for all $x\in\mathcal{H}$.

\begin{example}
\label{ex:3dvar}Suppose that $\mathrm{\mathrm{R}}=\mathrm{B},$ $\mathrm{B}$ is
a trace class operator,
\begin{equation}
x^{f}\in\mathrm{B}^{\nicefrac{1}{2}}\left(  \mathcal{H}\right)  ,
\label{eq:3dvar:x:in:B}%
\end{equation}
and
\begin{equation}
y\notin\mathrm{B}^{\nicefrac{1}{2}}\left(  \mathcal{H}\right)  .
\label{eq:3dvar:y:in:B}%
\end{equation}
If $x\notin\mathrm{B}^{\nicefrac{1}{2}}\left(  \mathcal{H}\right)  ,$ then
\[
\left(  x-x^{f}\right)  \notin\mathrm{B}^{\nicefrac{1}{2}}\left(
\mathcal{H}\right)
\]
because $\mathrm{B}^{\nicefrac{1}{2}}\left(  \mathcal{H}\right)  $ is an
linear subspace of $\mathcal{H}$ and (\ref{eq:3dvar:x:in:B}), so
\[
J^{\text{3DVAR}}\left(  x\right)  =\infty.
\]
When $x\in\mathrm{B}^{\nicefrac{1}{2}}\left(  \mathcal{H}\right)  $, then
\[
\left(  y-x\right)  \notin\mathrm{B}^{\nicefrac{1}{2}}\left(  \mathcal{H}%
\right)
\]
using (\ref{eq:3dvar:y:in:B}), so, again,
\[
J^{\text{3DVAR}}\left(  x\right)  =\infty.
\]
Therefore, $J^{\text{3DVAR}}\left(  x\right)  =\infty$ for all $x\in
\mathcal{H}.$
\end{example}

Naturally, a minimization of $J^{\text{3DVAR}}\left(  x\right)  $ does not
make sense unless there is at least one $x\in\mathcal{H}$ such that
$J^{\text{3DVAR}}\left(  x\right)  <\infty.$ Fortunately, we can formulate
a~sufficient condition when this condition is fulfilled.

\begin{theorem}
\label{thm:bounded-3dvar}If at least one of the operators $\mathrm{B}$ and
$\mathrm{R}$ has positive lower bound, then for any possible values of $x^{f}$
and $y,$ there exist at least one $x\in\mathcal{H}$ such that
\[
J^{\text{3DVAR}}\left(  x\right)  <\infty.
\]

\end{theorem}

\begin{proof}
Without loss of generality, assume that $\mathrm{R}$ has positive lower bound.
Hence,
\[
\left|  x-y\right|  _{\mathrm{R}^{-1}}^{2}<\infty
\]
for any combinations of $x\in\mathcal{H}$ and $y\in\mathcal{H}.$ Therefore,
given $x^{f}\in\mathcal{H},$
\[
J^{\text{3DVAR}}\left(  x\right)  <\infty
\]
for any $x\in\left\{  z\in\mathrm{B}^{\nicefrac{1}{2}}%
\mbox{\ensuremath{\left(\mathcal{H}\right)}}:z=x-x^{f}\right\}  .$
\end{proof}

\subsection{KF and EnKF}

\label{sec:KF}The ensemble Kalman filter
\cite{Evensen-1994-SDA,Houtekamer-1998-DAE}, which is based on the Kalman
filter \cite{Kalman-1960-NAL,Kalman-1961-NRF}, is one of the most popular
assimilation method. The key part of both methods is the Kalman gain operator
\begin{equation}
\mathcal{K}:\ \mathrm{P}\mathbb{\mapsto}\mathrm{H}\mathrm{P}^{f}%
\mathrm{H}^{\ast}\left(  \mathrm{H}\mathrm{P}^{f}\mathrm{H}^{\ast}%
+\mathrm{R}\right)  ^{-1}. \label{eq:KGO}%
\end{equation}
where $\mathrm{P\in\left[  \mathcal{H}\right]  }$ and $\mathrm{P}>0$. If the
data space is finite dimensional, then the matrix $\mathrm{H}\mathrm{P}%
\mathrm{H}^{\ast}+\mathrm{R}$ is positive definite, and the inverse is well
defined. However, when data space is infinite dimensional, the operator
$\mathrm{H}\mathrm{P}\mathrm{H}^{\ast}+\mathrm{R}$ may not be defined on the
whole space since an inverse of a trace class operator is only densely
defined. Therefore, the KF update equation
\[
X^{a}=X^{f}+\mathcal{K}\left(  \mathrm{P}^{f}\right)  \left(  y-\mathrm{H}%
X^{f}\right)  ,
\]
where $\mathrm{P}^{f}=\mathrm{cov}\left(  X^{f}\right)  $, may not be
applicable since there is no guarantee that the term $\mathcal{K}\left(
\mathrm{P}^{f}\right)  \left(  y-\mathrm{H}X^{f}\right)  $ is defined. Yet,
similarly to 3DVAR, there is a sufficient condition when the Kalman filter
algorithm is well defined for any possible values.

\begin{theorem}
\label{thm:boulded-kf}If the data noise covariance $\mathrm{R}$ has positive
lower bound, then the Kalman gain operator $\mathcal{K}\left(  \mathrm{P}%
^{f}\right)  $ is defined on the whole space $\mathcal{H}$.
\end{theorem}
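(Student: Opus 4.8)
The plan is to reduce the whole statement to a single invertibility question: I want to show that the operator
\[
\mathrm{S}=\mathrm{H}\mathrm{P}^{f}\mathrm{H}^{\ast}+\mathrm{R},
\]
which appears inside the inverse in the definition (\ref{eq:KGO}) of the Kalman gain, is boundedly invertible on all of $\mathcal{H}$. Once this is established, the gain $\mathcal{K}\left(  \mathrm{P}^{f}\right)  =\mathrm{H}\mathrm{P}^{f}\mathrm{H}^{\ast}\mathrm{S}^{-1}$ is a composition of operators in $\left[  \mathcal{H}\right]  $ and is therefore defined on the whole space.

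First I would record that $\mathrm{H}\mathrm{P}^{f}\mathrm{H}^{\ast}$ is symmetric and positive semidefinite. Indeed, since $\mathrm{P}^{f}>0$ is in particular positive semidefinite, for every $u\in\mathcal{H}$ we have
\[
\left\langle \mathrm{H}\mathrm{P}^{f}\mathrm{H}^{\ast}u,u\right\rangle =\left\langle \mathrm{P}^{f}\mathrm{H}^{\ast}u,\mathrm{H}^{\ast}u\right\rangle \geq0 .
\]
Next I would transfer the lower bound from $\mathrm{R}$ to $\mathrm{S}$. By hypothesis there is $\alpha>0$ with $\left\langle \mathrm{R}u,u\right\rangle \geq\alpha\left\vert u\right\vert ^{2}$ for all $u$, so adding the nonnegative term above gives
\[
\left\langle \mathrm{S}u,u\right\rangle =\left\langle \mathrm{H}\mathrm{P}^{f}\mathrm{H}^{\ast}u,u\right\rangle +\left\langle \mathrm{R}u,u\right\rangle \geq\alpha\left\vert u\right\vert ^{2}.
\]
Thus $\mathrm{S}$ is symmetric, as a sum of symmetric operators, and has positive lower bound, i.e.\ $\mathrm{S}>0$ in the notation of Section \ref{sec:notation}.

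The one genuine step is then to conclude that a bounded symmetric operator with positive lower bound is invertible, with inverse bounded and defined on all of $\mathcal{H}$. Here I would invoke an external result: either the Lax--Milgram lemma, applied to the bounded coercive bilinear form $\left(  u,v\right)  \mapsto\left\langle \mathrm{S}u,v\right\rangle $, or equivalently the spectral theorem, noting that the estimate $\left\langle \mathrm{S}u,u\right\rangle \geq\alpha\left\vert u\right\vert ^{2}$ together with boundedness of $\mathrm{S}$ confines the spectrum of $\mathrm{S}$ to $\left[  \alpha,\left\Vert \mathrm{S}\right\Vert \right]  $, so that $0\notin\sigma\left(  \mathrm{S}\right)  $ and $\mathrm{S}^{-1}\in\left[  \mathcal{H}\right]  $ with $\left\Vert \mathrm{S}^{-1}\right\Vert \leq1/\alpha$. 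I expect this invertibility argument to be the only point that appeals to a standard theorem; the rest is the purely algebraic observation that adding the positive semidefinite operator $\mathrm{H}\mathrm{P}^{f}\mathrm{H}^{\ast}$ cannot destroy the positive lower bound inherited from $\mathrm{R}$.
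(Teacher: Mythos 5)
Your proposal is correct and follows essentially the same route as the paper: show that $\mathrm{H}\mathrm{P}^{f}\mathrm{H}^{\ast}+\mathrm{R}$ inherits the positive lower bound of $\mathrm{R}$ because the added term is positive semidefinite, then invoke the standard fact that a bounded symmetric operator with positive lower bound is boundedly invertible on all of $\mathcal{H}$. You merely spell out the two steps (the coercivity estimate and the Lax--Milgram/spectral-theorem invertibility) that the paper states in one line.
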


\begin{proof}
If $\mathrm{R}$ has positive lower bound, then the linear operator
$\mathrm{H}\mathrm{P}^{f}\mathrm{H}^{\ast}+\mathrm{R}$ has positive lower
bound as well because $\mathrm{P}$ is the covariance operator, so
$\mathrm{P}>0.$ The statement now follows from the fact that an operator with
positive lower bound has an inverse defined on the whole space.
\end{proof}

\section{Bayesian approach}

\label{sec:Bayes}Denote by $\mu^{f}$ the distribution of $X^{f}.$ Bayes'
theorem prescribes the analysis measure by
\begin{equation}
\mu^{a}\left(  B\right)  \propto\int_{B}d\left(  y\left\vert x\right.
\right)  d\mu^{f}\left(  x\right)  \label{eq:Bayes-update}%
\end{equation}
for all $B\in\mathcal{B}\left(  \mathcal{H}\right)  $ if
\begin{equation}
c\left(  y\right)  =\int_{\mathcal{H}}d\left(  y\left\vert x\right.  \right)
d\mu^{f}\left(  x\right)  >0, \label{eq:bayes-constant}%
\end{equation}
where the given function $d:\ \mathcal{H}\times\mathcal{H}\rightarrow\left[
0,\infty\right)  $ is called a data likelihood. If the distribution of the
forecast and data noise are both Gaussian, then
\begin{equation}
d\left(  y\left\vert x\right.  \right)  \propto\exp\left(  -\frac{1}%
{2}\left\vert y-x\right\vert _{\mathrm{R}^{-1}}^{2}\right)  ,
\label{eq:likelihood}%
\end{equation}
where
\[
\left\vert x\right\vert _{\mathrm{R}^{-1}}^{2}=%
\begin{cases}
\left\langle \mathrm{R}^{-\nicefrac{1}{2}}x,\mathrm{R}^{-\nicefrac{1}{2}}%
x\right\rangle  & \text{if}\ x\in\mathrm{R}^{\nicefrac{1}{2}}\left(
\mathcal{H}\right)  ,\\
\infty & \text{if}\ x\notin\mathrm{\mathrm{R}}^{\nicefrac{1}{2}}\left(
\mathcal{H}\right)  .
\end{cases}
\]
With the natural convention that $\exp\left(  -\infty\right)  =0$, we have%
\[
d\left(  y\left\vert x\right.  \right)  =0\text{ if }x\notin\mathrm{\mathrm{R}%
}^{\nicefrac{1}{2}}\left(  \mathcal{H}\right)  .
\]

When both state and data spaces are finite dimensional, condition
(\ref{eq:bayes-constant}) is fulfilled for any possible value of observation
$y\in\mathcal{H}.$ Unfortunately, when both spaces are infinite dimensional,
condition (\ref{eq:bayes-constant}) may not be fulfilled as shown in the next example.

\begin{example}
\label{ex:equivalent}Assume that $X^{f}\sim\mathcal{N}\left(  m^{f}%
,\mathrm{P}^{f}\right)  $ and, $m^{f}$ belongs to the Cameron-Martin space of
$X^{f},$ i.e., $m^{f}\in\left(  \mathrm{P}^{f}\right)  ^{\nicefrac{1}{2}}%
\left(  \mathcal{H}\right)  $. If the measures $\mu^{f}$ and $\mu_{\mathrm{R}%
}\sim\mathcal{N}\left(  0,\mathrm{R}\right)  $ are equivalent, then both have
the same Cameron-Martin space, and%
\[
\mu^{f}\left(  \mathrm{R}^{\nicefrac{1}{2}}\left(  \mathcal{H}\right)
\right)  =\mu_{\mathrm{R}}\left(  \mathrm{R}^{\nicefrac{1}{2}}\left(
\mathcal{H}\right)  \right)  =0,
\]
so
\begin{align*}
\int_{\mathcal{H}}d\left(  0\left\vert x\right.  \right)  d\mu^{f}\left(
x\right)  =  &  \int_{\mathrm{R}^{\nicefrac{1}{2}}\left(  \mathcal{H}\right)
}\exp\left(  -\frac{1}{2}\left\vert x\right\vert _{\left(  \mathrm{R}\right)
^{-1}}^{2}\right)  d\mu^{f}\left(  x\right) \\
&  +\int_{\mathcal{H}\setminus\mathrm{R}^{\nicefrac{1}{2}}\left(
\mathcal{H}\right)  }\exp\left(  -\infty\right)  d\mu^{f}\left(  x\right)  =0.
\end{align*}

\end{example}

\begin{remark}
Another data likelihood is proposed in \cite{Stuart-2010-IPB},%
\[
\widetilde{d}\left(  y\left\vert x\right.  \right)  =%
\begin{cases}
d\left(  y\left\vert x\right.  \right)  & \text{if }\ c\left(  y\right)  >0,\\
1 & \text{if }\ c\left(  y\right)  =0,
\end{cases}
\]
where $c\left(  y\right)  $ is defined by (\ref{eq:bayes-constant}), and
$d\left(  y\left\vert x\right.  \right)  $ is defined by (\ref{eq:likelihood}%
). This definition leads to the analysis distribution
\begin{equation}
\widetilde{\mu}^{a}\left(  B\right)  =%
\begin{cases}
\frac{1}{c\left(  y\right)  }\int_{B}d\left(  y\left\vert x\right.  \right)
d\mu^{f}\left(  x\right)  & \text{if}\ c\left(  y\right)  >0,\\
\mu^{f}\left(  B\right)  & \text{if}\ c\left(  y\right)  =0
\end{cases}
\label{eq:mu_a:Stuart}%
\end{equation}
for all $B\in\mathcal{B}\left(  \mathcal{H}\right)  .$ That is, any data $y$
such that $c\left(  y\right)  =0$ is ignored.
\end{remark}

Obviously, the Bayesian update (\ref{eq:Bayes-update}), is useful only if the
set
\[
A=\left\{  y\in\mathcal{H}:\quad\int_{\mathcal{H}}d\left(  y\left|  x\right.
\right)  d\mu^{f}\left(  x\right)  =0\right\}
\]
is empty. The sufficient condition when the set $A$ is empty is similar to
conditions when previously mentioned assimilation techniques are well defined.

\begin{theorem}
\label{thm:bounded-bayes}The set
\[
A=\left\{  y\in\mathcal{H}:\quad\int_{\mathcal{H}}d\left(  y\left\vert
x\right.  \right)  d\mu^{f}\left(  x\right)  =0\right\}  ,
\]
where $\mu^{f}\sim\mathcal{N}\left(  m^{f},\mathrm{P}^{f}\right)  ,$ and the
data likelihood is defined by (\ref{eq:likelihood}), is empty if the operator
$\mathrm{R}$ has positive lower bound.
\end{theorem}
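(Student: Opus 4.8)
The goal is to show that the set $A$ is empty, i.e., that
\[
c(y)=\int_{\mathcal{H}}d\left(y\left\vert x\right.\right)d\mu^{f}(x)>0
\]
for every $y\in\mathcal{H}$, under the hypothesis that $\mathrm{R}>0$ (symmetric with positive lower bound). The key consequence of $\mathrm{R}>0$ is that $\mathrm{R}^{\nicefrac{1}{2}}$ is boundedly invertible, so $\mathrm{R}^{\nicefrac{1}{2}}(\mathcal{H})=\mathcal{H}$. Hence for every $x\in\mathcal{H}$ we have $y-x\in\mathrm{R}^{\nicefrac{1}{2}}(\mathcal{H})$, and the exponent $\left\vert y-x\right\vert_{\mathrm{R}^{-1}}^{2}=\langle\mathrm{R}^{-\nicefrac{1}{2}}(y-x),\mathrm{R}^{-\nicefrac{1}{2}}(y-x)\rangle$ is finite, so $d\left(y\left\vert x\right.\right)>0$ for all $x$. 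This already removes the degeneracy that killed Example~\ref{ex:equivalent}, where the likelihood vanished off a null set. The remaining task is purely quantitative: the integrand is strictly positive everywhere, so the integral is positive provided we can exhibit a set of positive $\mu^{f}$-measure on which the integrand is bounded below.

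The plan is to produce a lower bound by bounding the exponent above on a suitable set. Using the bounded inverse, write $\left\vert y-x\right\vert_{\mathrm{R}^{-1}}^{2}\le\|\mathrm{R}^{-\nicefrac{1}{2}}\|^{2}\,|y-x|^{2}\le\alpha^{-1}|y-x|^{2}$, where $\alpha>0$ is the lower bound of $\mathrm{R}$; this uses that the positive lower bound $\langle\mathrm{R}u,u\rangle\ge\alpha|u|^{2}$ forces $\|\mathrm{R}^{-1}\|\le\alpha^{-1}$. Then
\[
d\left(y\left\vert x\right.\right)=\exp\left(-\tfrac{1}{2}\left\vert y-x\right\vert_{\mathrm{R}^{-1}}^{2}\right)\ge\exp\left(-\tfrac{1}{2\alpha}|y-x|^{2}\right).
\]
Fix any radius $\rho>0$ and restrict to the ball $B_{\rho}=\{x:|x-y|\le\rho\}$; on this ball the right-hand side is at least $\exp(-\rho^{2}/(2\alpha))>0$. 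Therefore
\[
c(y)\ge\int_{B_{\rho}}d\left(y\left\vert x\right.\right)d\mu^{f}(x)\ge\exp\left(-\tfrac{\rho^{2}}{2\alpha}\right)\mu^{f}(B_{\rho}).
\]

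It then remains to check that $\mu^{f}(B_{\rho})>0$ for some (equivalently, every sufficiently large) $\rho$. This is where I expect the only real point of care to lie, though it is mild: $\mu^{f}=\mathcal{N}(m^{f},\mathrm{P}^{f})$ is a genuine $\sigma$-additive Gaussian measure on $\mathcal{H}$ (its covariance $\mathrm{P}^{f}$ is trace class, as it is the covariance of the honest random variable $X^{f}$), so it assigns positive mass to every open ball. Concretely, since $\mu^{f}$ is a probability measure, the balls $B_{n}$ centered at $y$ with integer radii increase to $\mathcal{H}$, so $\mu^{f}(B_{n})\to 1$ and hence $\mu^{f}(B_{\rho})>0$ for all large $\rho$; one may also simply invoke that a Gaussian measure on a separable Hilbert space has full topological support shifted by its mean, but the monotone-convergence argument suffices and avoids support technicalities. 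Choosing such a $\rho$ gives $c(y)>0$, so $y\notin A$. Since $y\in\mathcal{H}$ was arbitrary, $A=\varnothing$, which is the claim.
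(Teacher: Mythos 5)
Your proof is correct and follows the same route as the paper: the positive lower bound of $\mathrm{R}$ makes $\mathrm{R}^{\nicefrac{1}{2}}$ surjective, so the likelihood is strictly positive everywhere, and integrating a positive function against the probability measure $\mu^{f}$ gives a positive result. The paper states this last step as immediate, whereas you justify it quantitatively via the bound $\left\vert y-x\right\vert_{\mathrm{R}^{-1}}^{2}\leq\alpha^{-1}\left\vert y-x\right\vert^{2}$ and the positivity of $\mu^{f}$ on a large ball; this is a harmless (and slightly more careful) elaboration of the same argument.
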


\begin{proof}
The operator $\mathrm{R}$ has positive lower bound, so the data likelihood
function
\[
d\left(  y\left\vert x\right.  \right)  \propto\exp\left(  -\frac{1}%
{2}\left\vert y-x\right\vert _{\mathrm{R}^{-1}}^{2}\right)
\]
is positive for any $x,y\in\mathcal{H}$, and it follows that
\[
\int_{\mathcal{H}}d\left(  y\left\vert x\right.  \right)  d\mu^{f}\left(
x\right)  >0
\]
for all $y\in\mathcal{H}$.
\end{proof}

In the special case when both forecast and data covariances commute, we can
show that this condition is also necessary for the set $A$ to be empty. Recall
that operators $\mathrm{P}^{f}$ and $\mathrm{R}$ commute when
\[
\mathrm{P}^{f}\mathrm{R}-\mathrm{RP}^{f}=0.
\]

\begin{theorem}
\label{thm:commute}Assume that $\mu^{f}\sim\mathcal{N}\left(  m^{f}%
,\mathrm{P}^{f}\right)  $, and operators $\mathrm{P}^{f}$ and $\mathrm{R}$
commute. Then
\[
\int_{\mathcal{H}}\exp\left(  -\frac{1}{2}\left\vert y-x\right\vert
_{\mathrm{R}^{-1}}^{2}\right)  d\mu^{f}\left(  x\right)  >0
\]
for all $y\in\mathcal{H}$ if and only if the operator $\mathrm{R}$ has
positive lower bound.
\end{theorem}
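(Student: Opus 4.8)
The plan is to prove the nontrivial ("only if") direction by contraposition: assuming $\mathrm{R}$ has no positive lower bound, I would exhibit a single $y\in\mathcal{H}$ for which the integral vanishes. The "if" direction is already Theorem \ref{thm:bounded-bayes}, so I may cite it.

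First I would diagonalize. Since $\mu^{f}$ is a genuine Gaussian measure on $\mathcal{H}$, its covariance $\mathrm{P}^{f}$ is trace class, hence compact and self-adjoint. Splitting $\mathcal{H}=\ker\mathrm{P}^{f}\oplus(\ker\mathrm{P}^{f})^{\perp}$, both summands are invariant under $\mathrm{R}$, because $\mathrm{R}$ commutes with $\mathrm{P}^{f}$ and hence with its spectral projections. On $V_{1}=(\ker\mathrm{P}^{f})^{\perp}$ the operator $\mathrm{P}^{f}$ has finite-dimensional eigenspaces, on each of which $\mathrm{R}$ acts as a symmetric operator; diagonalizing $\mathrm{R}$ inside each such eigenspace yields a common orthonormal eigenbasis $\{e_{i}\}$ of $V_{1}$ with $\mathrm{P}^{f}e_{i}=\lambda_{i}e_{i}$ ($\lambda_{i}>0$, $\lambda_{i}\to0$ by trace class) and $\mathrm{R}e_{i}=\rho_{i}e_{i}$ ($\rho_{i}\geq0$). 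In these coordinates $\mu^{f}$ is the product of one-dimensional Gaussians $\mathcal{N}(m_{i},\lambda_{i})$ with $m_{i}=\langle m^{f},e_{i}\rangle$, while on $V_{0}=\ker\mathrm{P}^{f}$ the measure is the point mass at the projection $m_{0}$ of $m^{f}$.

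Next I would reduce the integral to a product. Because $\mathrm{R}$ is block diagonal, $|y-x|_{\mathrm{R}^{-1}}^{2}$ splits across the coordinates and across $V_{0}\oplus V_{1}$, so with the convention $\exp(-\infty)=0$ a Tonelli/monotone-convergence argument for the product measure gives
\begin{equation*}
\int_{\mathcal{H}}\exp\left(-\tfrac12|y-x|_{\mathrm{R}^{-1}}^{2}\right)d\mu^{f}(x)=\exp\left(-\tfrac12|y_{0}-m_{0}|_{\mathrm{R}^{-1}}^{2}\right)\prod_{i}a_{i},
\end{equation*}
where $y_{0}$ is the $V_{0}$-component of $y$, the product runs over the $V_{1}$-basis, and each one-dimensional factor evaluates, by a direct Gaussian convolution, to $a_{i}=\sqrt{\rho_{i}/(\rho_{i}+\lambda_{i})}\,\exp(-(y_{i}-m_{i})^{2}/(2(\rho_{i}+\lambda_{i})))$ (this formula also correctly returns $0$ when $\rho_{i}=0$). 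Taking logarithms, the product is positive precisely when both $\sum_{i}\ln(1+\lambda_{i}/\rho_{i})<\infty$ and $\sum_{i}(y_{i}-m_{i})^{2}/(\rho_{i}+\lambda_{i})<\infty$.

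Finally I would exploit the failure of the lower bound. Since $\mathrm{R}$ is block diagonal, its lower bound is the minimum of those of $\mathrm{R}|_{V_{0}}$ and $\mathrm{R}|_{V_{1}}$, so at least one of these blocks has no positive lower bound. If it is $\mathrm{R}|_{V_{1}}$, then $\inf_{i}\rho_{i}=0$, hence along a subsequence $\rho_{i}+\lambda_{i}\to0$, and I can choose $(y_{i}-m_{i})$ square-summable — so that $y\in\mathcal{H}$, using $m^{f}\in\mathcal{H}$ — yet with $\sum_{i}(y_{i}-m_{i})^{2}/(\rho_{i}+\lambda_{i})=\infty$, forcing the product, and therefore the integral, to vanish. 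If instead $\mathrm{R}|_{V_{0}}$ has no positive lower bound, then $\mathrm{R}^{1/2}$ is not bounded below on $V_{0}$ and hence not surjective there, so I can pick $y_{0}$ with $y_{0}-m_{0}\notin\mathrm{R}^{1/2}(V_{0})$, making the prefactor $\exp(-\tfrac12|y_{0}-m_{0}|_{\mathrm{R}^{-1}}^{2})$ zero. I expect the crux to be the $V_{1}$ construction: arranging a square-summable perturbation of $m^{f}$ whose $\mathrm{R}^{-1}$-weighted energy diverges, which is exactly where the absence of a positive lower bound — the weights $1/(\rho_{i}+\lambda_{i})$ being unbounded — is indispensable. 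The justification of the infinite-product formula and the separate treatment of $\ker\mathrm{P}^{f}$, where $\mathrm{R}$ may have continuous spectrum and no eigenbasis, are the secondary technical points.
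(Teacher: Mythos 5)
Your proposal is correct and follows essentially the same route as the paper: a common orthonormal eigenbasis for $\mathrm{P}^{f}$ and $\mathrm{R}$, reduction of the integral to the infinite product $\prod_{i}\left(1+\lambda_{i}/\rho_{i}\right)^{-\nicefrac{1}{2}}\exp\left(-\left(y_{i}-m_{i}\right)^{2}/\left(2\left(\rho_{i}+\lambda_{i}\right)\right)\right)$ via monotone convergence and Fubini, the log-sum convergence criterion, and, when $\inf_{i}\rho_{i}=0$, a square-summable perturbation of $m^{f}$ whose $\mathrm{R}^{-1}$-weighted energy diverges. Your separate handling of $\ker\mathrm{P}^{f}$ (where $\mathrm{R}$ need not admit an eigenbasis and the prior marginal is a point mass) is a point of extra care that the paper's blanket appeal to a total orthonormal set of common eigenvectors glosses over.
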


\begin{proof}
Without loss of generality assume that $m^{f}=0.$ The operators $\mathrm{P}%
^{f}$ and $\mathrm{R}$ are symmetric, commute, and $\mathrm{P}^{f}$ is
compact, so there exists a total orthonormal set $\left\{  e_{i}\right\}  $ of
common eigenvectors,%
\[
\mathrm{P}^{f}e_{i}=p_{i}e_{i},\ \mathrm{R}e_{i}=r_{i}e_{i},\text{ for all
}i\in\mathbb{N,}%
\]
e.g., \cite[Lemma 8]{Kasanicky-2016-EKF}, \cite{Levin-2002-IQT}, \cite[Section
II.10]{Neumann-1955-MFQ}.

For any $z\in\mathcal{H}$, denote by $\left\{  z_{i}\right\}  $ its Fourier
coefficient with respect to the orthonormal set $\left\{  e_{i}\right\}  $,
\[
z_{i}=\left\langle z,e_{i}\right\rangle ,\ i\in\mathbb{N}.
\]
Using this notation,
\[
d\left(  y|x\right)  =\exp\left(  -\frac{1}{2}\left\vert y-x\right\vert
_{\mathrm{R}^{-1}}^{2}\right)  =\prod_{i=1}^{\infty}\exp\left(  -\frac{\left(
y_{i}-x_{i}\right)  ^{2}}{2r_{i}}\right)  ,
\]
and
\[
\int_{\mathcal{H}}d\left(  y|x\right)  d\mu^{f}\left(  x\right)
=\int_{\mathcal{H}}\prod_{i=1}^{\infty}\exp\left(  -\frac{\left(  y_{i}%
-x_{i}\right)  ^{2}}{2r_{i}}\right)  d\mu^{f}\left(  x\right)  .
\]
Denote
\[
f_{n}\left(  x\right)  =\prod_{i=1}^{n}\exp\left(  -\frac{\left(  y_{i}%
-x_{i}\right)  ^{2}}{2r_{i}}\right)  ,\ n\in\mathbb{N}.
\]
Since $0<e^{-s^{2}}\leq1$ for any $s\in\mathbb{R}$, $\left\{  f_{n}\right\}  $
is a monotone sequence of functions on $\mathcal{H}$. The functions $f_{n}$
are continuous and therefore measurable, and by the the monotone convergence
theorem,
\begin{equation}
\int_{\mathcal{H}}d\left(  y|x\right)  d\mu^{f}\left(  x\right)
=\lim_{n\rightarrow\infty}\left(  \int_{\mathcal{H}}\prod_{i=1}^{n}\exp\left(
-\frac{\left(  y_{i}-x_{i}\right)  ^{2}}{2r_{i}}\right)  d\mu^{f}\left(
x\right)  \right)  . \label{eq:int:Bayes:2}%
\end{equation}

For each $i\in\mathbb{N}$, the random variable $\left\langle X^{f}%
,e_{i}\right\rangle $ has $\mathcal{N}\left(  0,p_{i}^{f}\right)  $
distribution, which we denote by $\mu_{i}^{f}$. Additionally,
\[
\mathrm{E}\left(  \left\langle X^{f},e_{i}\right\rangle \left\langle
X^{f},e_{j}\right\rangle \right)  =\delta_{ij},\ i,j\in\mathbb{N},
\]
and, in particular, the random variables $\left\langle X^{f},e_{i}%
\right\rangle $ and $\left\langle X^{f},e_{j}\right\rangle $ are independent
unless $i=j.$ Then,
\[
\int_{\mathcal{H}}f_{n}\left(  x\right)  d\mu^{f}\left(  x\right)
=\int_{\mathcal{H}}f_{n}\left(  x_{1},\ldots,x_{n},\ldots\right)  d\mu_{1}%
^{f}\left(  x_{1}\right)  \times\cdots\times d\mu_{n}^{f}\left(  x_{n}\right)
\]
for all $n\in\mathbb{N},$ and, using Fubini's theorem,
\[
\int_{\mathcal{H}}f_{n}\left(  x\right)  d\mu^{f}\left(  x\right)
=\int_{\mathbb{R}}\cdots\int_{\mathbb{R}}f_{n} \left(  x_{1},\ldots,x_{n}
\right)  d\mu_{1}^{f}\left(  x_{1}\right)  \cdots d\mu_{n}^{f}\left(
x_{n}\right)  .
\]
Now (\ref{eq:int:Bayes:2}) yields that
\[
\int_{\mathcal{H}}d\left(  y|x\right)  d\mu^{f}\left(  x\right)
=\lim_{n\rightarrow\infty}\prod_{i=1}^{n}\int_{\mathbb{R}}\exp\left(
-\frac{\left(  y_{i}-x_{i}\right)  ^{2}}{2r_{i}}\right)  d\mu_{i}^{f}\left(
x_{i}\right)  ,
\]
and, since the measure $\mu_{i}^{f}$ is absolutely continuous with respect to
the Lebesgue measure $\lambda$ on $\mathbb{R}$,
\begin{equation}
\int_{\mathcal{H}}d\left(  y|x\right)  d\mu^{f}\left(  x\right)  =\prod
_{i=1}^{\infty}\int_{-\infty}^{\infty}\exp\left(  -\frac{\left(  y_{i}%
-x_{i}\right)  ^{2}}{2r_{i}}\right)  \psi\left(  x_{i}\right)  d\lambda\left(
x_{i}\right)  \label{eq:prod:int:exp}%
\end{equation}
where
\[
\psi_{i}\left(  x\right)  =\frac{1}{\sqrt{2\pi p_{i}^{f}}}\exp\left(
-\frac{x_{i}^{2}}{2p_{i}^{f}}\right)  ,
\]
i.e., $\psi_{i}$ is the density of a $\mathcal{N}\left(  0,p_{i}^{f}\right)
$-distributed random variable.

The identity
\begin{align*}
\frac{\left(  y_{i}-x_{i}\right)  ^{2}}{r_{i}}+\frac{x_{i}^{2}}{p_{i}^{f}}  &
=\left(  \frac{1}{p_{i}^{f}}+\frac{1}{r_{i}}\right)  x_{i}^{2}-2\frac
{x_{i}y_{i}}{r_{i}}+\frac{y_{i}^{2}}{r_{i}}\\
&  =\frac{\left(  x_{i}-m_{i}^{a}\right)  ^{2}}{p_{i}^{a}}+\frac{y_{i}^{2}%
}{r_{i}+p_{i}^{f}},
\end{align*}
with
\[
m_{i}^{a}=\frac{p_{i}^{f}}{r_{i}+p_{i}^{f}}y_{i}\quad\text{and}\quad p_{i}%
^{a}=\left(  \frac{1}{p_{i}^{f}}+\frac{1}{r_{i}}\right)  ^{-1},
\]
allows us to write (\ref{eq:prod:int:exp}) in the form
\[
\int_{\mathcal{H}}d\left(  y|x\right)  d\mu^{f}\left(  x\right)  =\prod
_{i=1}^{\infty}\frac{1}{\sqrt{2\pi p_{i}^{f}}}\int_{-\infty}^{\infty}%
\exp\left(  -\frac{\left(  x_{i}-m_{i}^{a}\right)  ^{2}}{2p_{i}^{a}}%
-\frac{y_{i}^{2}}{2\left(  r_{i}+p_{i}^{f}\right)  }\right)  d\lambda\left(
x_{i}\right)  .
\]
By standard properties of the normal distribution,
\[
\int_{-\infty}^{\infty}\exp\left(  -\frac{\left(  x_{i}-m_{i}^{a}\right)
^{2}}{2p_{i}^{a}}\right)  dx_{i}=\sqrt{2\pi p_{i}^{a}}%
\]
for each $i\in\mathbb{N}$, so
\begin{align}
\int_{\mathcal{H}}d\left(  y|x\right)  d\mu^{f}\left(  x\right)  =  &
\prod_{i=1}^{\infty}\left(  \frac{p_{i}^{a}}{p_{i}^{f}}\right)
^{\nicefrac{1}{2}}\exp\left(  -\frac{y_{i}^{2}}{2\left(  r_{i}+p_{i}%
^{f}\right)  }\right) \nonumber\\
=  &  \prod_{i=1}^{\infty}\left(  1+\frac{p_{i}^{f}}{r_{i}}\right)
^{-\nicefrac{1}{2}}\exp\left(  -\frac{y_{i}^{2}}{2\left(  r_{i}+p_{i}%
^{f}\right)  }\right)  , \label{eq:inf-prod}%
\end{align}
where we used the computation
\[
\frac{p_{i}^{a}}{p_{i}^{f}}=\frac{1}{p_{i}^{f}\left(  \frac{1}{p_{i}^{f}%
}+\frac{1}{r_{i}}\right)  }=\frac{1}{1+\frac{p_{i}^{f}}{r_{i}}}.
\]
The infinite product (\ref{eq:inf-prod}) is nonzero if and only if the
following sum converges,
\begin{align}
&  \sum_{i=1}^{\infty}\log\left(  \left(  1+\frac{p_{i}^{f}}{r_{i}}\right)
^{-\nicefrac{1}{2}}\exp\left(  -\frac{y_{i}^{2}}{2\left(  r_{i}+p_{i}%
^{f}\right)  }\right)  \right) \nonumber\\
&  =-\frac{1}{2}\left(  \sum_{i=1}^{\infty}\log\left(  1+\frac{p_{i}^{f}%
}{r_{i}}\right)  \right)  -\left(  \sum_{i=1}^{\infty}\frac{y_{i}^{2}}%
{r_{i}+p_{i}^{f}}\right)  . \label{eq:sum:eig:val}%
\end{align}

To conclude the proof, we need to show that that (\ref{eq:sum:eig:val})
converges if and only if
\begin{equation}
r=\inf_{i\in\mathbb{N}}\left\{  r_{i}\right\}  >0. \label{eq:cond:inf:r}%
\end{equation}

First, the equivalence
\begin{equation}
\sum_{i=1}^{\infty}\ln\left(  1+\frac{p_{i}^{f}}{r_{i}}\right)  <\infty
\quad\Leftrightarrow\quad\sum_{i=1}^{\infty}\frac{p_{i}^{f}}{r_{i}}%
<\infty\label{eq:sum:ln:convergence}%
\end{equation}
follows from the limit comparison test because
\[
\lim_{i\rightarrow\infty}\frac{\ln\left(  1+\frac{p_{i}^{f}}{r_{i}}\right)
}{\frac{p_{i}^{f}}{r_{i}}}=1
\]
when
\begin{equation}
\lim_{i\rightarrow\infty}\frac{p_{i}^{f}}{r_{i}}=0. \label{eq:lim:p/r}%
\end{equation}
If condition (\ref{eq:lim:p/r}) is not satisfied, then both sums in
(\ref{eq:sum:ln:convergence}) diverge. If $r>0$, then the sum
\[
\sum_{i=1}^{\infty}\frac{p_{i}^{f}}{r_{i}}\leq\sum_{i=1}^{\infty}\frac
{p_{i}^{f}}{r}\leq r^{-1}\sum_{i=1}^{\infty}p_{i}^{f},
\]
and this sum converges because $\mathrm{P}^{f}$ is trace class.

Further, if $r>0$, then
\[
\sum_{i=1}^{\infty}\frac{y_{i}^{2}}{r_{i}+p_{i}^{f}}\leq\sum_{i=1}^{\infty
}\frac{y_{i}^{2}}{r}\leq r^{-1}\sum_{i=1}^{\infty}y_{i}^{2}=r^{-1}\left\vert
y\right\vert ^{2}<\infty
\]
since $\left\{  y_{i}\right\}  $ are Fourier coefficients of $y.$ On the other
side, when $r=0$, we will construct $\widetilde{y}\in\mathcal{H}$ such that
$\left\vert \widetilde{y}\right\vert \leq1$ and
\[
\sum_{i=1}^{\infty}\frac{\widetilde{y}_{i}^{2}}{r_{i}+p_{i}^{f}}=\infty.
\]
Since $r=0,$ there exists a subsequence $\left\{  r_{i_{k}}\right\}
_{k=1}^{\infty}$ such that
\[
r_{i_{k}}\leq\frac{1}{2^{k}},\quad k\in\mathbb{N},
\]
and we define
\[
\widetilde{y}=\sum_{i=1}^{\infty}\widetilde{y}_{i}e_{i}%
\]
with
\[
\widetilde{y}_{i}=%
\begin{cases}
r_{i}^{\nicefrac{1}{2}} & \text{if }i\in\left\{  i_{k}\right\}  _{k\in
\mathbb{N}},\\
0 & \text{if }i\notin\left\{  i_{k}\right\}  _{k\in\mathbb{N}.}%
\end{cases}
\]
The element $\widetilde{y}$ lies in the unit circle because
\[
\left\vert \widetilde{y}\right\vert ^{2}=\sum_{i=1}^{\infty}\widetilde{y}%
_{i}^{2}=\sum_{k=1}^{\infty}r_{i_{k}}\leq\sum_{k=1}^{\infty}\frac{1}{2^{k}%
}=1,
\]
while
\[
\sum_{i=1}^{\infty}\frac{\widetilde{y}_{i}^{2}}{r_{i}+p_{i}^{f}}=\sum
_{k=1}^{\infty}\frac{r_{i_{k}}}{r_{i_{k}}+p_{i_{k}}^{f}}=\sum_{k=1}^{\infty
}\frac{1}{1+\frac{p_{i_{k}}^{f}}{r_{i_{k}}}}=\infty
\]
where the last equality follows immediately from (\ref{eq:lim:p/r}).

Therefore, the sum (\ref{eq:sum:eig:val}) is finite for all $y\in\mathcal{H}$
if and only if $r>0.$
\end{proof}

The construction of the element $\widetilde{y}$ at the end of the previous
proof may be generalized, and it implies the following interesting corollary.

\begin{corollary}
\label{cor:dense}Assume that operators $\mathrm{P}^{f}$ and $\mathrm{R}$
commute. The set
\[
A=\left\{  y\in\mathcal{H}:\quad\int_{\mathcal{H}}d\left(  y\left\vert
x\right.  \right)  d\mu^{f}\left(  x\right)  =0\right\}  ,
\]
where $\mu^{f}\sim\mathcal{N}\left(  m^{f},\mathrm{P}^{f}\right)  ,$ and the
data likelihood is defined by (\ref{eq:likelihood}), is dense in $\mathcal{H}$
if the the operator $\mathrm{R}$ does not have positive lower bound.
\end{corollary}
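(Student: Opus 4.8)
The plan is to generalize the construction of $\widetilde y$ from the end of the proof of Theorem~\ref{thm:commute}: instead of starting at the origin, I would perturb an arbitrary target $z\in\mathcal H$ by an arbitrarily small vector supported on a sparse set of high modes where the weights $r_i+p_i^f$ are minuscule, forcing the second sum in (\ref{eq:sum:eig:val}) to diverge. I keep the common eigenbasis $\{e_i\}$ and the eigenvalues $p_i^f,r_i$ from that proof, and recall via (\ref{eq:inf-prod}) that $y\in A$ exactly when the series (\ref{eq:sum:eig:val}) diverges to $-\infty$. The key simplification is that both series in (\ref{eq:sum:eig:val}) have nonnegative terms, so the logarithmic part contributes a nonpositive quantity to (\ref{eq:sum:eig:val}) and can only help; it therefore suffices to produce, near any $z$, a vector $y$ with $\sum_i y_i^2/(r_i+p_i^f)=\infty$.

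First I would extract a suitable thin set of indices. Since $\mathrm R$ has no positive lower bound, $\inf_i r_i=0$; as the eigenvalues $r_i$ are positive (as in the proof of Theorem~\ref{thm:commute}), this infimum is not attained, so $r_i$ is arbitrarily small along infinitely many indices tending to infinity, and since $\mathrm P^f$ is trace class we also have $p_i^f\to0$. Hence, given $\varepsilon>0$, I can choose a strictly increasing sequence $\{i_k\}$ with
\[
s_k:=r_{i_k}+p_{i_k}^f\le \varepsilon^2\,2^{-k},\qquad k\in\mathbb N .
\]

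Next I fix an arbitrary $z\in\mathcal H$ with coefficients $z_i=\langle z,e_i\rangle$ and define $y=z+\sum_{k=1}^{\infty}\delta_k e_{i_k}$, where $\delta_k$ is chosen with $|\delta_k|=s_k^{1/2}$ and $z_{i_k}\delta_k\ge 0$ (same sign as $z_{i_k}$, taking $\delta_k=s_k^{1/2}$ when $z_{i_k}=0$). Then
\[
|y-z|^2=\sum_{k=1}^{\infty}\delta_k^2=\sum_{k=1}^{\infty}s_k\le \varepsilon^2\sum_{k=1}^{\infty}2^{-k}=\varepsilon^2 ,
\]
so $y\in\mathcal H$ and $|y-z|\le\varepsilon$. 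The sign alignment does the work: since $z_{i_k}\delta_k\ge0$ we get $(z_{i_k}+\delta_k)^2\ge \delta_k^2=s_k$, whence
\[
\sum_{i=1}^{\infty}\frac{y_i^2}{r_i+p_i^f}\ge\sum_{k=1}^{\infty}\frac{(z_{i_k}+\delta_k)^2}{s_k}\ge\sum_{k=1}^{\infty}1=\infty .
\]
Because the logarithmic series in (\ref{eq:sum:eig:val}) is nonnegative, this forces (\ref{eq:sum:eig:val}) to equal $-\infty$, so the product (\ref{eq:inf-prod}) vanishes and $y\in A$. As $z$ and $\varepsilon$ were arbitrary, $A$ is dense in $\mathcal H$.

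I expect the only genuine obstacle to be making the perturbation simultaneously small in norm and divergent in the weighted sum, and this is exactly what the two devices above resolve: choosing the subsequence so that the weights $s_k$ are summable controls $|y-z|$, while aligning the signs of $\delta_k$ with $z_{i_k}$ guarantees $(z_{i_k}+\delta_k)^2\ge s_k$ term by term, so the weighted sum diverges irrespective of the unknown sizes of the $z_{i_k}$. A minor point to check is that $\inf_i r_i=0$ really yields indices $i_k\to\infty$ with $r_{i_k}\to0$ rather than accumulation at finitely many indices; this holds because a finite set of strictly positive numbers has a strictly positive minimum.
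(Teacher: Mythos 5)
Your proof is correct and rests on the same basic construction as the paper's: perturb an arbitrary $z$ along a sparse subsequence of common eigendirections where the relevant eigenvalues are tiny, so that the perturbation is small in norm while the weighted series $\sum_i y_i^2/(r_i+p_i^f)$ diverges. Two of your choices, however, genuinely differ from the paper and are both improvements. First, you size the $k$-th perturbation as $(r_{i_k}+p_{i_k}^f)^{1/2}$ rather than $r_{i_k}^{1/2}$, which makes each term of the weighted series at least $1$ outright and so removes the paper's case split on whether $\lim_i p_i^f/r_i=0$ (in the paper, the case where this limit fails has to be handled by pushing the divergence onto the logarithmic series in (\ref{eq:sum:eig:val})). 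Second, and more substantively, you align the sign of $\delta_k$ with that of $z_{i_k}$, which guarantees $(z_{i_k}+\delta_k)^2\ge\delta_k^2$ term by term. The paper's proof adds $+r_{i_k}^{1/2}$ unconditionally and then uses the bound $\widetilde z_{i_k}^{2}\ge r_{i_k}$, which can fail when $z_{i_k}<0$ (for instance $z_{i_k}=-r_{i_k}^{1/2}$ gives $\widetilde z_{i_k}=0$); your sign device repairs this small gap. Your reduction --- that both series in (\ref{eq:sum:eig:val}) have nonnegative terms, so divergence of the second alone forces the product (\ref{eq:inf-prod}) to vanish --- and your check that $\inf_i r_i=0$ with all $r_i>0$ yields infinitely many indices with $r_i$ arbitrarily small are both sound.
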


\begin{proof}
To show that $A$ is dense it is sufficient to show that for each
$z\in\mathcal{H}$ and any $\delta>0$
\[
A\cap\left\{  u\in\mathcal{H}:\left\vert z-u\right\vert <\delta\right\}
\neq\emptyset.
\]
Let $z\in\mathcal{H}$ and $\delta>0$. Similarly as in the previous proof,
denote by $\left\{  e_{i}\right\}  $ the total orthonormal set such that
\[
\mathrm{P}^{f}e_{i}=p_{i}e_{i}\ \text{and}\ \mathrm{R}e_{i}=r_{i}e_{i}.
\]
Because
\[
r=\inf\left\{  r_{i}\right\}  =0,
\]
there exists a subsequence $\left\{  r_{i_{k}}\right\}  _{k=1}^{\infty}$ such
that
\[
r_{i_{k}}\leq\frac{\delta^{2}}{2^{k}}%
\]
for all $k\in\mathbb{N}.$ Now, define $\widetilde{z}=\sum_{i=1}^{\infty
}\widetilde{z}_{i}e_{i}$ such that
\[
\widetilde{z}_{i}=%
\begin{cases}
\left\langle z,e_{i}\right\rangle +r_{i}^{\nicefrac{1}{2}} & \text{if }%
i\in\left\{  i_{k}\right\}  _{k\in\mathbb{N}},\\
\left\langle z,e_{i}\right\rangle  & \text{if }i\notin\left\{  i_{k}\right\}
_{k\in\mathbb{N},}%
\end{cases}
\]
so
\[
\left\vert z-\widetilde{z}\right\vert =\left(  \sum_{i=1}^{\infty}\left\vert
\left\langle z-\widetilde{z},e_{i}\right\rangle \right\vert ^{2}\right)
^{\nicefrac{1}{2}}=\left(  \sum_{k=1}^{\infty}r_{i_{k}}\right)
^{\nicefrac{1}{2}}\leq\delta.
\]
Using the same method as in the proof of Theorem \ref{thm:commute},
\[
\int_{\mathcal{H}}\exp\left(  -\frac{1}{2}\left\vert \widetilde{z}%
-x\right\vert _{\left(  \mathrm{R}^{\left(  t\right)  }\right)  ^{-1}}%
^{2}\right)  d\mu^{f}\left(  x\right)  >0
\]
if and only if
\begin{equation}
\left(  \sum_{i=1}^{\infty}\ln\left(  1+\frac{p_{i}^{f}}{r_{i}}\right)
\right)  +\left(  \sum_{i=1}^{\infty}\frac{\widetilde{z}_{i}^{2}}{r_{i}%
+p_{i}^{f}}\right)  <\infty. \label{eq:At:proof:sum:}%
\end{equation}
However, when
\begin{equation}
\lim_{i\rightarrow\infty}\frac{p_{i}^{f}}{r_{i}}=0, \label{eq:pi/ri}%
\end{equation}
then
\[
\sum_{i=1}^{\infty}\frac{\widetilde{z}_{i}^{2}}{r_{i}+p_{i}^{f}}\geq\sum
_{k=1}^{\infty}\frac{\widetilde{z}_{i_{k}}^{2}}{r_{i_{k}}+p_{i_{k}}^{f}}%
\geq\sum_{k=1}^{\infty}\frac{r_{i_{k}}}{r_{i_{k}}\left(  1+\frac{p_{i_{k}}%
^{f}}{r_{i_{k}}}\right)  }=\infty.
\]
Therefore, using the same arguments as in the previous proof, if
(\ref{eq:pi/ri}) is not satisfied, then%
\[
\sum_{i=1}^{\infty}\ln\left(  1+\frac{p_{i}^{f}}{r_{i}}\right)  =\infty.
\]
Therefore, the sum at the left-hand side of (\ref{eq:At:proof:sum:}) diverges,
and $\widetilde{z}\in A.$
\end{proof}

\begin{remark}
\label{rem:agapiou} \cite[Theorem~3.8]{Agapiou-2015-ISC} have
shown that if the spectrum of $\mathrm{A}=\mathrm{R}^{-\nicefrac{1}{2}}
\mathrm{PR}^{-\nicefrac{1}{2}}$ consists of countably many eigenvalues (plus
zero), then the analysis measure $\mu^{a}$ is well defined and absolutely
continuous with respect to the forecast measure $\mu^{f}$ for $\eta$-almost
all $y$ if and only if $\mathrm{A}$ is trace class. The data error
distribution $\eta= \mathcal{N}\left(  0,\mathrm{R}\right)  $ is understood as
a Gaussian measure on a Hilbert space $\mathcal{X}\supset\mathcal{H}$. The
space $\mathcal{X}$ has a weaker topology than $\mathcal{H}$, and draws from
$\eta$ may not be in $\mathcal{H}$.

For example, suppose that $\mathrm{P}$ is trace class and $\mathrm{R}%
=\mathrm{I}$. Then $\mathrm{A}=\mathrm{P}$ is trace class, $\eta
=\mathcal{N}\left(  0,\mathrm{R}\right)  $ is white noise with the
Cameron-Martin space $\mathcal{H}$, and since the measure $\eta$ of the
Cameron-Martin space of $\eta$ is zero, data vector $y$ drawn from
$\mathcal{N}\left(  0,\mathrm{R}\right)  $ on $\mathcal{X}$ is in fact $\eta
$-a.s. not in $\mathcal{H}$. Consequently, in this example, \cite[Theorem~3.8]%
{Agapiou-2015-ISC} is not informative about the well-posedness of the analysis
measure $\mu^{a}$ when $y\in\mathcal{H}$, where the problem is formulated. In
the present approach, the data error distribution $\mathcal{N}\left(
0,\mathrm{R}\right)  $ is only a cylindrical measure on $\mathcal{H}$, and the
analysis measure $\mu^{a}$ is well defined and absolutely continuous with
respect to the forecast measure $\mu^{f}$, for all $y\in\mathcal{H}$.
\end{remark}

\bibliographystyle{amsplain}
\bibliography{../../references/extra,../../references/geo,../../references/other}

\end{document}